\title{Irregular Hodge numbers of\\ confluent hypergeometric differential equations}
\author{\vspace{0cm} Claude Sabbah and Jeng-Daw Yu}
\institution{CMLS, \'Ecole polytechnique, CNRS, Universit\'e Paris-Saclay,
F--91128 Palaiseau cedex,
France}\\
\email{Claude.Sabbah@polytechnique.edu}\\
\institution{Department of Mathematics, National Taiwan University, 
Taipei 10617, Taiwan}\\
\email{jdyu@ntu.edu.tw}\\
\date{\vspace{-5ex}} 
\journal{\'Epijournal de G\'eom\'etrie Alg\'ebrique} 
\let\mathcal\mathscr
\newtheorem{assumption}{Assumption}
\newenvironment{theoreme*}{\paragraph{Theorem.}\em}{\rm}
\newenvironment{remarques*}{\paragraph{Remarks.}}{}
\newtheorem{lemme}{Lemma}
\newenvironment{enumeratei}{\begin{enumerate}[\rm (i)]}{\end{enumerate}}
\newenvironment{enumeratea}{\begin{enumerate}[\rm (a)]}{\end{enumerate}}
\def\cD{\mathcal{D}}
\def\cH{\mathcal{H}}
\def\cM{\mathcal{M}}
\def\cO{\mathcal{O}}
\def\cT{\mathcal{T}}
\def\CC{\mathbb{C}}
\def\NN{\mathbb{N}}
\def\PP{\mathbb{P}}
\def\RR{\mathbb{R}}
\def\ZZ{\mathbb{Z}}
\def\ccE{\EuScript{E}}
\def\ccH{\EuScript{H}}
\def\ccL{\EuScript{L}}
\def\ccM{\EuScript{M}}
\def\ccN{\EuScript{N}}
\def\alphag{\boldsymbol{\alpha}}
\def\betag{\boldsymbol{\beta}}
\newcommand{\bbullet}{{\scriptscriptstyle\bullet}}
\newcommand{\cbbullet}{{\raisebox{1pt}{$\bbullet$}}}
\newcommand{\isom}{\stackrel{\sim}{\longrightarrow}}
\def\cf{see\kern.3em}
\def\eg{e.g.\kern.3em}
\def\ie{i.e.,\ }
\def\resp{\text{resp.}\kern.3em}
\let\leq\leqslant
\let\geq\geqslant
\let\wh\widehat
\let\ov\overline
\newcommand{\sfi}{\mathsf{i}}
\DeclareMathOperator{\twopii}{2\pi\sfi}
\newcommand{\intt}{{\mathrm{int}}}
\newcommand{\irr}{{\mathrm{irr}}}
\newcommand{\regg}{{\mathrm{reg}}}
\newcommand{\rd}{{\mathrm{d}}}
\let\map f
\let\mero\varphi
\DeclareMathOperator{\gr}{gr}
\DeclareMathOperator{\rk}{rk}
\DeclareMathOperator{\IrrMHM}{\mathsf{IrrMHM}}
\DeclareMathOperator{\MTM}{\mathsf{MTM}}
\def\to{\mathchoice{\longrightarrow}{\rightarrow}{\rightarrow}{\rightarrow}}
\def\mto{\mathchoice{\longmapsto}{\mapsto}{\mapsto}{\mapsto}}
\def\hto{\mathrel{\lhook\joinrel\to}}
\def\To#1{\mathchoice{\xrightarrow{\textstyle\kern4pt#1\kern3pt}}{\stackrel{#1}{\longrightarrow}}{}{}}
\let\oldbigoplus\bigoplus
\renewcommand{\bigoplus}{\mathop{\textstyle\oldbigoplus}\displaylimits}
\let\oldbigotimes\bigotimes
\renewcommand{\bigotimes}{\mathop{\textstyle\oldbigotimes}\displaylimits}
\let\oldbigwedge\bigwedge
\renewcommand{\bigwedge}{\mathop{\textstyle\oldbigwedge}\displaylimits}
\let\oldbigcap\bigcap
\renewcommand{\bigcap}{\mathop{\textstyle\oldbigcap}\displaylimits}
\let\oldbigcup\bigcup
\renewcommand{\bigcup}{\mathop{\textstyle\oldbigcup}\displaylimits}
\let\oldprod\prod
\renewcommand{\prod}{\mathop{\textstyle\oldprod}\displaylimits}
\let\oldbigsqcup\bigsqcup
\renewcommand{\bigsqcup}{\mathop{\textstyle\oldbigsqcup}\displaylimits}
\newcommand{\RedefinitSymbole}[1]{%
\expandafter\let\csname old\string#1\endcsname=#1
\let#1=\relax
\newcommand{#1}{\csname old\string#1\endcsname\,}%
}
\newcommand{\Afu}{\mathbb{A}^{\!1}}
\newcommand{\Clt}{\CC[t]\langle\partial_t\rangle}
\newcommand{\Cltm}{\CC[t,t^{-1}]\langle\partial_t\rangle}
\newcommand{\Cltau}{\CC[\tau]\langle\partial_\tau\rangle}
\newcommand{\Clv}{\CC[v]\langle\partial_v\rangle}
\newcommand{\Clvm}{\CC[v,v^{-1}]\langle\partial_v\rangle}
\newcommand{\loccit}{loc.\ cit.}
\begin{document}


\maketitle



\begin{prelims}


\def\abstractname{Abstract}
\abstract{We give a formula computing the irregular Hodge numbers for a confluent hypergeometric differential equation.}

\vspace{0.10cm}

\keywords{Irregular Hodge filtration; mixed Hodge module; confluent hypergeometric system; Laplace transformation}

\vspace{0.10cm}

\MSCclass{14F40; 32S35; 32S40}

\vspace{0.35cm}

\languagesection{Fran\c{c}ais}{%

\textbf{Titre. Nombres de Hodge irr\'eguliers des \'equations diff\'erentielles hyperg\'eom\'etriques confluentes} \commentskip \textbf{R\'esum\'e.} Nous donnons une formule calculant les nombres de Hodge irr\'eguliers pour les \'equations diff\'erentielles hyperg\'eom\'etriques confluentes.}

\end{prelims}


\newpage

\setcounter{tocdepth}{1} \tableofcontents

\section{Introduction}
Differential equations with irregular singularities occur in various branches of Algebraic geometry, like mirror symmetry or the theory of exponential periods. They are also of interest as providing a complex analogue of $\ell$-adic sheaves with wild ramification in positive characteristic. Irregular Hodge theory, as initiated by Deligne (\cf \cite{Deligne8406}), gives, for a large class of such equations, a convenient analogue to Hodge theory for Picard-Fuchs equations appearing more classically in complex Algebraic geometry. It is proved in \cite{Bibi15} that any rigid irreducible differential equation on the Riemann sphere, having regular singularities or not, and having real formal exponents at each singular point, underlies a variation of irregular Hodge structures away from its singular points. In this article, we consider the first and most classical example of such irregular differential equations, namely that of confluent hypergeometric differential equations, and we aim at determining the ranks of the irregular Hodge bundles. In the non-confluent case, the Hodge numbers of this variation, as well as the limiting Hodge numbers at the singularities, have been computed by R.\,Fedorov \cite{Fedorov15}, relying on~\cite{D-S12}.

Let $\alphag=(\alpha_1,\dots,\alpha_n)$ and $\betag=(\beta_1,\dots,\beta_m)$ be finite increasing sequences of length $n\geq0$ and $m\geq0$ (not both zero) of real numbers in $[0,1)$ (with the convention that a sequence of length zero is empty). We say that the pair $(\alphag,\betag)$ is non-resonant if $\alpha_i\neq\beta_j$ for any $i\in\{1,\dots,n\}$ and $j\in\{1,\dots,m\}$. All along this article, we make the following assumption.

\begin{assumption}\label{ass:nonresonant}
The pair $(\alphag,\betag)$ is non-resonant.
\end{assumption}

We consider the possibly confluent hypergeometric differential equation
\begin{equation}\label{eq:conflhypergeom}
\ccH=\ccH(\alphag,\betag)=\prod_{i=1}^n(t\partial_t-\alpha_i)-t\prod_{j=1}^m(t\partial_t-\beta_j),
\end{equation}
with the usual convention that a product indexed by the empty set is equal to~$1$. We know that the associated meromorphic flat bundle $\cH(\alphag,\betag)$ on $\PP^1$ is irreducible (\cf\cite[Cor.\,3.2.1]{Katz90}), that its index of rigidity is equal to $2$ (\cf\cite[Th.\,3.7.1\,\&\,Th.\,3.7.3]{Katz90}), and that it is rigid (\cf\cite[Th.\,4.7\,\&\,Th.\,4.10]{B-E04}). If $n=m$, it has singularities at $0,1,\infty$, and they are regular. If $n> m$, it has an irregular singularity at $t=\infty$, a regular singularity at $t=0$, and no other singularity. If $n<m$, the roles of $0$ and $\infty$ are exchanged.

Since the $\alpha_i$'s and $\beta_j$'s are real, the local monodromy of $\cH(\alphag,\betag)$ at its regular singularities is unitary and the formal monodromy at its irregular singular point is also unitary. If $n=m$ (regular singularities), there exists a variation of polarizable Hodge structure on $\CC^*$ that $\cH(\alphag,\betag)$ underlies, which is unique up to a shift of the Hodge filtration.

In this article, we consider the confluent case $n>m$ (the case $n<m$ can be obtained by a change of variable $t\mto1/t$), so we fix two integers $n>m\geq0$ and we set $\mu=n-m>0$. Then $\cH(\alphag,\betag)$ has a regular singularity at $t=0$, an irregular singularity of pure slope $1/\mu$ at $t=\infty$, and no other singularity. By \cite[Th.\,0.7]{Bibi15}, the minimal extension $\cH^{\min}(\alphag,\betag)$ at $t=0$ of $\cH(\alphag,\betag)$ underlies a unique pure object $\cT^{\min}(\alphag,\betag)$ of the category $\IrrMHM(\PP^1_t)$ of irregular mixed Hodge modules on~$\PP^1$, and it comes equipped with a irregular Hodge filtration. In contrast with the non-confluent case, this filtration is indexed by a set $A+\ZZ$, where $A$ is a finite set in~$\RR$, and this filtration is unique up to a~shift of $A$ by a real number. We determine these numbers and their multiplicities in term of the pair $(\alphag,\betag)$.

\begin{theoreme*}
The jumps of the irregular Hodge filtration $F^\cbbullet_\irr\cH(\alphag,\betag)$ occur (up to a~global $\RR$-shift) at
\begin{equation}\label{eq:hypergeomstar}
\rho(k):=\mu\alpha_k-k+\#\{i\mid\beta_i<\alpha_k\},
\end{equation}
and for any $p\in\RR$ we have
\begin{equation}\label{eq:hypergeomstarstar}
\rk\gr^p_{F_\irr}\cH(\alphag,\betag)=\#\rho^{-1}(p).
\end{equation}
\end{theoreme*}

\begin{remarques*}\mbox{}\label{rem:hypergeom}
\begin{enumeratei}
\item \label{rem:hypergeom1}
Recall that the irregular Hodge filtration is unique up to a shift by a real number, so the formula \eqref{eq:hypergeomstarstar} above is understood up to an $\RR$-shift of the filtration.
\item \label{rem:hypergeom2}
The statement and proof of the theorem hold under the assumption that \hbox{$n>m$}. However, the formula remains meaningful when $m=n$, and it gives back the formula of R.\,Fedorov \cite{Fedorov15} (if we notice that R.\,Fedorov considers the local system of solutions of~$\ccH$, while we consider the dual one of horizontal sections\footnote{This remark is due to Nicolas Martin, \cf \cite[Lem.\,3.6]{Martin18b}.}). Another proof is given in \cite[Th.\,3.4]{Martin18b}, where the vanishing cycle Hodge number at $t=1$ is also made explicit. Notice that our proof of the theorem relies on the previous result by R.\,Fedorov.

\item \label{rem:hypergeom3}
The case where $m=0$ is due to A.\,Casta\~{n}o\,Dom\'{\i}nguez and C.\,Sevenheck \cite[Th.\,4.7]{CD-S17}, and another proof in this case is given by both authors in \cite[\S3.2.c]{Bibi15}. Moreover, A.\,Casta\~{n}o\,Dom\'{\i}nguez, Th.\,Reichelt and C.\,Sevenheck have also obtained the case $n>m=1$ \cite[Th.\,5.8]{CD-R-S18} with different methods, relying on their results on GKZ systems.
\end{enumeratei}
\end{remarques*}

\section{Fourier transforms of Kummer pullbacks of hypergeometrics}\label{subsec:Katz}
We recall here a useful result of N.\,Katz (\cf\cite[Th.\,6.2.1]{Katz90}) which reduces the study of confluent hypergeometric differential equations to that of regular ones. Recall that we set $\mu:=n-m$. We denote by $\ov\betag$ the sequence of length $n$ obtained by concatenating and reordering the sequences $\betag$ and $0,1/\mu,\dots,(\mu-1)/\mu$. We also have
\[
0\leq\ov\beta_1\leq\cdots\leq\ov\beta_n<1.
\]

In the remaining part of this section, we will make the following assumptions, the first one being mainly for convenience.

\pagebreak[2]
\begin{assumption}\label{ass:stronglynonresonant}\mbox{}
\begin{enumeratei}
\item\label{ass:stronglynonresonant1}
We have $\alpha_i,\beta_j\in(0,1)$.
\item\label{ass:stronglynonresonant2}
The pair $(\alphag,\ov\betag)$ is non-resonant (in particular, $\mu\alpha_i\not\in\ZZ$ for any $i=1,\dots,n$).
\end{enumeratei}
\end{assumption}

The theorem of N.\,Katz relates the confluent $\cH(\alphag,\betag)$ to the non-confluent $\cH(\ov\betag,\alphag)$. We recall below this correspondence.

The confluent hypergeometric differential system $H:=\Clt/\Clt\cdot\ccH$ defined by \eqref{eq:conflhypergeom} is localized at $t=0$ because $\alpha_i\notin\ZZ$. We will also consider it as a $\Cltm$-module. We denote by $\rho_\mu:\CC^*_v\to\CC^*_t$ the cyclic covering $v\mto t=v^\mu$. Then $\rho_\mu^+H$ is a well-defined $\Clvm$-module, defined as such by the operator
\[
\ccH_\mu=\prod_{i=1}^n(\tfrac1\mu v\partial_v-\alpha_i)-v^\mu\prod_{j=1}^m(\tfrac1\mu v\partial_v-\beta_j).
\]
We set $H_\mu:=\Clv/(\ccH_\mu)$. Since $\mu\alpha_i\notin\ZZ$, we have \hbox{$H_\mu=H_\mu(!0)=H_\mu(*0)=H_\mu^{\min}$}. It also follows that~$H_\mu$ does not have any nonzero constant $\Clv$-submodule or quotient module.

Let $\wh H_\mu$ be the Fourier transform of $H_\mu$ by the Fourier correspondence $\tau=\partial_v$, $\partial_\tau=-v$. We have $\wh H_\mu=\Cltau/(\wh\ccH_\mu)$, with
\[
\wh\ccH_\mu=\prod_{i=1}^n(\tfrac1\mu \tau\partial_\tau+\alpha_i+\tfrac1\mu)-(\partial_\tau)^\mu\prod_{j=1}^m(\tfrac1\mu \tau\partial_\tau+\beta_j+\tfrac1\mu).
\]
As a consequence, $\wh H_\mu$ does not have any nonzero constant sub or quotient $\Cltau$-module, and it does not have either any sub or quotient $\Cltau$-module supported at the origin, \ie it is a minimal extension at the origin.

We set
\[
\alpha'_i=\alpha_i+1/\mu,\quad\beta'_j=\beta_j+1/\mu,
\]
With the change of variable $\iota:\CC^*\to\CC^*$ given by $\iota(\tau)=\tau'=1/\tau$, and the choice of the generator $\tau^{\prime-1}$ instead of $1$, $\iota^*(\wh H_\mu(*0))$ is defined by the operator
\begin{align*}
\ccH'_\mu&=\prod_{i=1}^n(\tfrac1\mu \tau'\partial_{\tau'}+1/\mu-\alpha'_i)-\mu^\mu\tau^{\prime\mu}\!\prod_{\ell=0}^{\mu-1}(\tfrac1\mu\tau'\partial_{\tau'}\!+\!1/\mu+\ell/\mu)\prod_{j=1}^m(\tfrac1\mu \tau'\partial_{\tau'}+1/\mu-\beta'_j)\\
&=\prod_{i=1}^n(\tfrac1\mu \tau'\partial_{\tau'}-\alpha_i)-\mu^\mu\tau^{\prime\mu}\prod_{\ell=1}^{\mu}(\tfrac1\mu\tau'\partial_{\tau'}+\ell/\mu)\prod_{j=1}^m(\tfrac1\mu \tau'\partial_{\tau'}-\beta_j)
\end{align*}
Clearly, we have $\wh H_\mu(*0)\simeq\iota^*\rho_\mu^*(H'(*0))$ with $H'(*0)=\CC[x,x^{-1}]\langle x\partial_x\rangle/(\ccH')$ and
\[
\ccH'=\prod_{i=1}^n(x\partial_x-\alpha_i)-\mu^\mu x\prod_{\ell=1}^{\mu}(x\partial_x+\ell/\mu)\prod_{j=1}^m(x\partial_x-\beta'_j).
\]
Let us consider $\ccH''$ obtained by reducing the exponents $-\ell/\mu$ modulo $\ZZ$. We obtain the sequences of exponents $\alpha_1,\dots,\alpha_n$, $\beta_1,\dots,\beta_m$ and $(1-\ell/\mu)_{\ell=1,\dots,\mu}$, the latter two being grouped as $\ov\beta_1,\dots,\ov\beta_n$. Hence,
\begin{equation}\label{eq:ccHprime}
\ccH''=\prod_{i=1}^n(x\partial_x-\alpha_i)-\mu^\mu x\prod_{i=1}^n(x\partial_x-\ov\beta_i).
\end{equation}
We set $H''(*0)=\CC[x,x^{-1}]\langle x\partial_x\rangle/(\ccH'')\simeq H'(*0)$. We have obtained that
\[
\wh H_\mu(*0)\simeq\iota^*\rho_\mu^*(H''(*0)).
\]
In conclusion, $H=H(*0)=\cH(\alphag,\betag)$ is obtained from $H''(*0)\simeq\cH(\ov\betag,\alphag)$ by the following operations:\footnote{We neglect here to take into account the coefficient in front of the right-hand term, that is, we consider hypergeometrics up to isomonodromy deformations.}
\begin{enumeratea}
\item
Kummer pullback $\rho_\mu:\tau'\mto x=\tau^{\prime\mu}$,
\item
change of variables $\tau'=1/\tau$,
\item
Fourier transform $v=-\partial_\tau$, $\partial_v=\tau$,
\item
Kummer descent by $\rho_\mu:v\mto t=v^\mu$.
\end{enumeratea}

\section{Reduction of the proof of the theorem to the case where Assumption~\ref{ass:stronglynonresonant} is fulfilled}

Recall that we assume that the pair $(\alphag,\betag)$ is non-resonant (Assumption \ref{ass:nonresonant}). Then for $\gamma>0$ small enough, setting $\alpha''_i=\gamma+\alpha_i$ and $\beta''_j=\gamma+\beta_j$, the sequences $\alphag'',\betag''$ are in $(0,1)$, increasing, and remains non-resonant. Moreover, one can choose $\gamma$ such that the pair $(\alphag'',\ov{\betag''})$ is non-resonant, \ie Assumption \ref{ass:stronglynonresonant} is fulfilled for it. Moreover, since the irregular Hodge filtration is defined up to an $\RR$-shift, we can add $\mu\gamma$ to Formula~\eqref{eq:hypergeomstar}. In~order to reduce the proof of the theorem to the case where Assumption \ref{ass:stronglynonresonant} is fulfilled, we~apply the following general lemma to the case of the rank-one local system $\ccL$ on~$\CC^*$ with monodromy $\exp(-\twopii\gamma)$ around $0$.

\begin{lemme}
Let $\ccM$ be a rigid irreducible holonomic $\cD_{\PP^1}$-module and let $\ccL$ be a rank-one meromorphic flat bundle on $\PP^1$ with poles along $\Sigma\subset\PP^1$, which is locally formally unitary. Let $\ccM'$ be the image of the map $\Gamma_{[!\Sigma]}(\ccM\otimes\ccL)\to\Gamma_{[*\Sigma]}(\ccM\otimes\ccL)$, which is also rigid irreducible holonomic. Then the jumping indices and ranks of the irregular Hodge filtration for $\ccM$ and $\ccM'$ are the same, up to an $\RR$-shift of the indices.
\end{lemme}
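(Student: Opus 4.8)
The plan is to reduce the comparison to the smooth locus and to exploit that the numbers $\rk\gr^p_{F_\irr}$ are generic invariants. First I would record that both modules actually carry an irregular Hodge filtration: since $\ccM$ is rigid irreducible with real formal exponents and $\ccL$ is locally formally unitary, each of $\ccM$ and $\ccL$ underlies a unique pure object of $\IrrMHM(\PP^1)$ by \cite[Th.\,0.7]{Bibi15}; the same holds for $\ccM'$, which is again rigid irreducible holonomic (tensoring by a rank-one flat bundle leaves the index of rigidity unchanged, and the middle extension of an irreducible object is irreducible).

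Next, set $U=\PP^1\moins(\Sigma\cup\mathrm{Sing}(\ccM))$. Over $U$ the functors $\Gamma_{[!\Sigma]}$ and $\Gamma_{[*\Sigma]}$ are the identity, so forming the image commutes with restriction to $U$ and yields $\ccM'|_U\simeq(\ccM\otimes\ccL)|_U\simeq\ccM|_U\otimes\ccL|_U$. The restriction $\ccL|_U$ is a rank-one flat bundle with unitary monodromy, hence underlies a rank-one variation of irregular Hodge structure that is pure of a single real index $c$; equivalently, $F^\cbbullet_\irr\ccL|_U$ has a unique jump, located at $c$.

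I would then invoke the compatibility of the irregular Hodge filtration with the tensor product in $\IrrMHM$, which is part of the formalism of \cite{Bibi15}: the filtration of a tensor product is the tensor-product (additive convolution) filtration, and because $\gr_{F_\irr}\ccL|_U$ is concentrated in the single index $c$, this convolution reduces to a translation by $c$. Thus $\gr^p_{F_\irr}(\ccM'|_U)\simeq\gr^{p-c}_{F_\irr}(\ccM|_U)$ for every $p\in\RR$. Since each $\gr^p_{F_\irr}$ is a coherent sheaf whose rank is its generic rank, both the jumping indices (those $p$ with $\rk\gr^p_{F_\irr}\neq0$) and their multiplicities are determined by the restriction to $U$; the middle extension along $\Sigma$ modifies the module only along $\Sigma$ and therefore alters neither. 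Comparing on $U$, the jumps of $F^\cbbullet_\irr\ccM'$ are those of $F^\cbbullet_\irr\ccM$ translated by $c$, with the same multiplicities, which is exactly the asserted equality up to an $\RR$-shift.

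The main obstacle is this third step: justifying, inside the six-operations-style formalism of \cite{Bibi15}, that $F_\irr$ is strictly compatible both with the tensor product by the rank-one object underlying $\ccL$ and with the middle-extension functor, so that no spurious generic jump is created at the points of $\Sigma$ where $\ccL$ acquires a pole while $\ccM$ remains smooth. This is precisely where local formal unitarity of $\ccL$ enters: it guarantees that $\ccL$ underlies an irregular Hodge module with a single real index $c$, so that the entire effect of the twist is the global shift by $c$ that can be read off on $U$.
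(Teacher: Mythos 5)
Your reduction to generic ranks on $U$ and the expectation that a unitary rank\-/one twist can only shift the filtration are the right guiding ideas (they are also the moral of the paper's argument), but your pivotal third step is a genuine gap: the ``compatibility of the irregular Hodge filtration with the tensor product in $\IrrMHM$'' that you invoke is not established in \cite{Bibi15}. The formalism there gives stability of $\IrrMHM$, and control of $F_\irr$, for exponential twists $\ccE^\mero\otimes(-)$, projective pushforward and strict morphisms; it contains no statement that the tensor product of an object of $\IrrMHM(\PP^1)$ with (the object underlying) a locally formally unitary rank-one meromorphic connection is again an object of $\IrrMHM(\PP^1)$, nor that its filtration is the convolution of the two filtrations. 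Had such a black-box statement been available, the lemma would be essentially immediate and would not require a proof. Note also that the statement of \cite[Th.\,0.7]{Bibi15} alone cannot rescue you: its uniqueness assertion applies to pure objects of $\IrrMHM(\PP^1)$, i.e.\ to \emph{global} objects, so to identify $F_\irr(\ccM')|_U$ with the shifted $F_\irr(\ccM)|_U$ you would have to exhibit a global object of $\IrrMHM(\PP^1)$ with underlying module $\ccM'$ whose filtration restricts on $U$ to the convolution filtration; constructing that object across $\Sigma$ is exactly the missing content. Your closing remark that ``local formal unitarity guarantees that the entire effect of the twist is the global shift'' restates the desired conclusion rather than proving it --- you correctly flagged the obstacle, but did not overcome it.

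The paper closes this gap by going back to the \emph{construction} behind \cite[Th.\,0.7]{Bibi15}, namely \cite[Prop.\,2.69]{Bibi15}: the rigid irreducible $\ccM$ is realized as the image of $\map^0_\dag(\ccE^\mero\otimes\Gamma_{[!H]}\ccN)\to\map^0_\dag(\ccE^\mero\otimes\Gamma_{[*H]}\ccN)$ for a projective $\map:X\to\PP^1$, a regular holonomic $\ccN$ underlying a mixed Hodge module, and a meromorphic function $\mero$, and $F_\irr(\ccM)$ is by definition obtained by pushing forward the filtrations of this data. Writing $\ccL=(\cO_{\PP^1}(*\Sigma),\rd+\rd\psi+\omega)$, the twist by $\ccL$ is then implemented \emph{upstairs}, where it is legitimate: one replaces $\ccN$ by $\map^+\ccL_\regg\otimes\ccN$ (a twist of a regular Hodge module by a unitary local system, which is standard) and $\mero$ by $\mero'=\mero+\psi\circ\map$, and this realizes $\ccM'$ by data of the same kind. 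Comparing the two constructions away from $\Sigma$, the only difference lies in the (arbitrary real) jumping index chosen for $\ccL_\regg$, whence equality of jumps and ranks up to an $\RR$-shift. If you want to salvage your outline, this is the route: absorb the tensor product at the level of the regular Hodge module $\ccN$ on $X$ rather than postulating a tensor formalism for $F_\irr$ on $\PP^1$.
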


\begin{proof}
We use, in the statement and the proof of the lemma, the notation as in the proof of \cite[Prop.\,2.69]{Bibi15}. We can write $\ccL=(\cO_{\PP^1}(*\Sigma),\rd+\rd\psi+\omega)$, where $\Sigma$ is the pole divisor of $\ccL$, $\psi$ is a global section of $\cO_{\PP^1}(*\Sigma)$ and~$\omega$ is a one-form with at most simple poles at $\Sigma$ (in the application to hypergeometrics we have in mind, we have $\psi=0$). Moreover, $\ccL$ is locally formally unitary if and only if $\ccL_\regg:=(\cO_{\PP^1}(*\Sigma),\rd+\omega)$ is unitary, \ie the residues of $\omega$ at $\Sigma$ are real.

It is shown in \loccit\ that there exists
\begin{itemize}
\item
a proper morphism $\map:X\to\PP^1$, with $X$ smooth projective,
\item
a normal crossing divisor $D$ in $X$ and a subdivisor $H\subset D$,
\item
a regular holonomic $\cD_X$-module $\ccN$ underlying a mixed Hodge module,
\item
a meromorphic function $\mero$ with poles in $H$,
\end{itemize}
such that $\ccM$ is the image of
\[
\map^0_\dag(\ccE^\mero\otimes\Gamma_{[!H]}\ccN)\to \map^0_\dag(\ccE^\mero\otimes\Gamma_{[*H]}\ccN).
\]
Set $D_1=\map^{-1}(\Sigma)$. By a suitable change of data as above, we can assume that the pole and zero divisors of~\hbox{$\mero+\psi\circ\map$} do not intersect, and that $D\cup D_1$ is a normal crossing divisor. Then (\cf\loccit) $\ccM'$ is obtained as the image of
\[
\map^0_\dag\bigl(\ccE^{\mero'}\otimes\Gamma_{[!H']}(\map^+\ccL_\regg\otimes\ccN)\bigr)\to \map^0_\dag\bigl(\ccE^{\mero'}\otimes\Gamma_{[*H']}(\map^+\ccL_\regg\otimes\ccN)\bigr).
\]
The irregular Hodge filtration on $\ccM$ is obtained by pushing forward by $f^0_\dag$ the irregular Hodge filtrations on $\ccE^\mero\otimes\Gamma_{[!H]}\ccN$ and $\ccE^\mero\otimes\Gamma_{[*H]}\ccN$, and by considering the image of it by the morphism above, and similarly for $\ccM'$. We notice that, away from $\Sigma$, both constructions possibly differ only because the irregular Hodge filtrations on $\ccN$ and $\map^+\ccL_\regg\otimes\ccN$ possibly differ. Since the only choice involved is that of the jumping index of the irregular Hodge filtration of $\ccL_\regg$, which can be an arbitrary real number, they actually do not differ, and we obtain the desired result.
\hfill $\Box$
\end{proof}

\section{Nearby cycles for the Kummer pullback}\label{subsec:nearby}
We take up here the notation as in \cite{D-S12}. Let $(V,F^\cbbullet V,\nabla)$ be a filtered flat vector bundle on the punctured disc $\Delta_x^*$ with coordinate $x$ underlying a variation of polarized complex Hodge structure. We denote by~$V^a$ the Deligne extension of $V$ on $\Delta_x$ on which the residue of $\nabla$ has eigenvalues in $[a,a+1)$, and we set $V^{-\infty}=\bigcup_aV^a$, which is a free $\cO_\Delta(*0)$-module of finite rank. For any $p\in\ZZ$ we set $F^pV^a=j_*F^pV\cap V^a$, where $j:\Delta^*\hto\Delta$ denotes the inclusion. This is a locally free $\cO_\Delta$-module and multiplication by $x$ induces an isomorphism $F^pV^a\isom F^pV^{a+1}$, so that for fixed $p$ and $a$, $\gr^p_F\gr^{a+k}(V):=F^pV^{a+k}/(F^{p+1}V^{a+k}+F^pV^{>a+k})$ has dimension independent of $k\in\ZZ$. For $\chi=\exp-\twopii a$ with $a\in\RR$, we set
\[
\nu_\chi^p(V)=\dim\gr^p_F\gr^a(V).
\]

For $\mu\in\NN^*$,\,let $\rho:\Delta_y\to\Delta_x$\,be the cyclic ramification $y\to x=y^\mu$\,of order $\mu$.\,The data $(\rho^*V,\rho^*F^\cbbullet V,\rho^*\nabla)$ underlies a variation of polarized complex Hodge structure.

\begin{lemme}\label{lem:KummerHodge}
We have
\[
\nu_\lambda^p(\rho^*V)=\sum_{\chi\mid\chi^{^\mu}=\lambda}\nu_\chi^p(V).
\]
\end{lemme}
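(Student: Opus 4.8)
The plan is to compare, near $0$, the canonical (Deligne) lattices and the Hodge subbundles of $V$ and of $\rho^*V$, exploiting that $\rho$ is a finite covering over $\Delta^*$. The starting point is the elementary relation between residues: from $x=y^\mu$ one gets $x\partial_x=\tfrac1\mu\,y\partial_y$, hence $\rho^*\nabla_{x\partial_x}=\tfrac1\mu\nabla_{y\partial_y}$, so that the residue of $\nabla_{y\partial_y}$ on a pulled-back lattice is $\mu$ times that of $\nabla_{x\partial_x}$; equivalently, the monodromy of $\rho^*V$ is $T^\mu$, where $T$ is the monodromy of $V$. Thus a generalized eigenvalue $a$ of the residue on $V$ gives the eigenvalue $\mu a$ on $\rho^*V$, and a monodromy eigenvalue $\lambda=\exp(-\twopii a')$ of $T^\mu$ is attained exactly by the $\mu$ characters $\chi$ with $\chi^\mu=\lambda$.

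I would then make the comparison of Deligne extensions explicit on flat multivalued sections. For a local flat multivalued section $s$ of $V$ lying in the generalized $\chi$-eigenspace of $T$, with $\chi=\exp(-\twopii a_\chi)$, the section $x^{a_\chi}s$ generates the corresponding summand of $V^{a_\chi}$ and contributes to $\gr^{a_\chi}(V)$, whereas its pullback $\rho^*(x^{a_\chi}s)=y^{\mu a_\chi}s$ carries the residue exponent $\mu a_\chi$. To bring it into the Deligne extension $(\rho^*V)^{a'}$, whose exponents lie in $[a',a'+1)$, I multiply by $y^{a'-\mu a_\chi}$, and the exponent $a'-\mu a_\chi$ is an integer precisely when $\chi^\mu=\lambda$. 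Hence, on the $\chi$-isotypic part, $(\rho^*V)^{a'}$ is obtained from $V^{a_\chi}$ by the pullback $\rho^*$ followed by an integral (hence $\Delta^*_y$-invertible) power of $y$. Since $\rho$ is \'etale over $\Delta^*$ we have $F^\cbbullet\rho^*V=\rho^*F^\cbbullet V$, and since $F^pV^a=j_*F^pV\cap V^a$ with multiplication by $x$ (\resp $y$) inducing filtered isomorphisms $\gr^a\isom\gr^{a+1}$, these two operations identify $\gr^p_F\gr^{a'}(\rho^*V)$ restricted to the $\chi$-part with $\gr^p_F\gr^{a_\chi}(V)$, with no shift of the Hodge degree~$p$; the latter has dimension $\nu^p_\chi(V)$.

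It remains to assemble the contributions. The Galois group $\ZZ/\mu$ of $\rho$ acts on $\rho^*V$ commuting with $\nabla$ and preserving $F^\cbbullet$, and splits $\gr^{a'}(\rho^*V)$ into $\mu$ isotypic pieces indexed by the $\chi$ with $\chi^\mu=\lambda$ (equivalently, the generalized $\lambda$-eigenspace of $T^\mu$ is the direct sum of the generalized $\chi$-eigenspaces of $T$ over $\chi^\mu=\lambda$); this splitting is compatible with the induced Hodge filtration. Taking $\dim\gr^p_F$ of each piece and invoking the identification of the previous step gives $\nu^p_\lambda(\rho^*V)=\sum_{\chi^\mu=\lambda}\nu^p_\chi(V)$.

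The step I expect to be the main obstacle is the filtered comparison: one must verify that forming $j_*F^pV\cap V^a$ really commutes with $\rho^*$ and with the integral renormalization by powers of $y$, so that the induced filtrations on the graded pieces match and the degree $p$ is preserved. This is the compatibility of the canonical extensions of the Hodge subbundles with the finite ramified cover $\rho$, and it is precisely here that one uses both that $\rho$ is a covering away from $0$ and that $(V,F^\cbbullet V,\nabla)$ underlies a polarized variation of Hodge structure.
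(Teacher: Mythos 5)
Your proof is correct and is essentially the paper's own argument: both identify the Deligne extensions and the canonically extended Hodge bundles of $\rho^*V$ with (reindexed) ones of $V$, the paper packaging the bookkeeping in one stroke via the decomposition $\rho^*V^{-\infty}=\bigoplus_{j=0}^{\mu-1}y^j\otimes V^{-\infty}$, which yields $(\rho^*V)^b=\bigoplus_{j}y^j\otimes V^{(b-j)/\mu}$ and $F^p(\rho^*V)^b=\bigoplus_{j}y^j\otimes F^pV^{(b-j)/\mu}$, whereas you carry out the same matching eigenvalue by eigenvalue using elementary sections and integral powers of $y$ (your $y^{a'-\mu a_\chi}$ is the $y^j$ of the paper's summand). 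The filtered compatibility you flag as the main obstacle is precisely the second displayed identity, which the paper asserts at the same level of detail (it follows from $\rho^*F^{\cbbullet}V=F^{\cbbullet}\rho^*V$ on $\Delta_y^*$ together with averaging over the deck group), so your argument has no gap beyond what the paper itself leaves to the reader.
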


\begin{proof}
Considering the germs at the origin, we have a natural identification
\[
\rho^*V^{-\infty}=\bigoplus_{j=0}^{\mu-1}y^j\otimes V^{-\infty}
\]
with a natural structure on the right-hand side, which leads to
\[
(\rho^*V)^b=\bigoplus_{j=0}^{\mu-1}y^j\otimes V^{(b-j)/\mu}\quad(b\in\RR),
\]
and similarly
\[
F^p(\rho^*V)^b:=(j_*\rho^*F^p)\cap(\rho^*V)^b=\bigoplus_{j=0}^{\mu-1}y^j\otimes F^pV^{(b-j)/\mu}.
\]
The lemma follows.
\hfill $\Box$
\end{proof}

Let us apply this formula to $H''(*0)$ defined by \eqref{eq:ccHprime} and its pullback $\iota^*\wh H_\mu(*0)$. Assump\-tion \ref{ass:stronglynonresonant} is supposed to hold. We consider nearby cycles at $x=0$ and $\tau'=\nobreak0$. We set $\chi_k=\exp(-\twopii\alpha_k)$ and $\lambda_k=\exp(-\twopii\mu\alpha_k)$. The formula of \cite[Th.\,3(b)]{Fedorov15} reads, since the nilpotent part for each eigenvalue of the monodromy of $H''(*0)$ at $x=0$ consists of one Jordan block,
\[
\nu_{0,\chi_{_k}}^p(\ccH'')=
\begin{cases}
1&\text{if }p=p_k:=\#\{i\mid\ov\beta_i<\alpha_k\}-k,\\
0&\text{otherwise}.
\end{cases}
\]
Note that
\[
\#\{j\in\{0,\dots,\mu-1\}\mid j/\mu<\alpha_k\}=[\mu\alpha_k],
\]
so that
\[
p_k=[\mu\alpha_k]+\#\{i\mid\beta_i<\alpha_k\}-k.
\]

Lemma \ref{lem:KummerHodge} gives:
\[
\nu_{0,\lambda_k}^p(\iota^*\wh H_\mu(*0))=
\begin{cases}
\#\{j\mid\mu\alpha_j\equiv\mu\alpha_k\bmod\ZZ\text{ and }p_j=p_k\}&\text{if }p=p_k,\\
0&\text{otherwise}.
\end{cases}
\]
Let us denote by $\{\mu\alpha_j\}\in[0,1)$ the fractional part of $\mu\alpha_j$ (it belongs to $(0,1)$, due to Assumption \ref{ass:stronglynonresonant}). Since $\rho(j)=\{\mu\alpha_j\}+p_j$, the previous formula can be rewritten as
\[
\nu_{0,\lambda_k}^p(\iota^*\wh H_\mu(*0))=
\begin{cases}
\#\{j\mid\rho(j)=\rho(k)\}&\text{if }p=p_k,\\
0&\text{otherwise},
\end{cases}
\]
and, after applying $\iota$, it reads
\begin{equation}\label{eq:nupinfHprimemu}
\nu_{\infty,\lambda_k}^p(\wh H_\mu(*0))=
\begin{cases}
\#\{j\mid\rho(j)=\rho(k)\}&\text{if }p=p_k,\\
0&\text{otherwise}.
\end{cases}
\end{equation}

\section{The filtered inverse stationary phase formula and irregular Hodge \hbox{numbers}}\label{subsec:Laplace}
By \cite[Prop.\,2.61]{Bibi15}, the general fibre of the Laplace transform of a mixed Hodge module on $\Afu$ carries a canonical irregular Hodge structure. We will give a formula for the irregular Hodge numbers in terms of the limit mixed Hodge structure at infinity of the mixed Hodge module.

We start by recalling some of the results in \cite{Bibi08}, since the way we formulate them is implicit in loc.~cit.

Let $(M,F^\cbbullet M)$ be a well-filtered regular holonomic $\Cltau$-module underlying a polarizable pure complex Hodge module. For the sake of simplicity, and since we will only use the result in this setting, we assume that the monodromy of $M$ around $\tau=\infty$ does not have $1$ as an eigenvalue. We associate with $(M,F^\cbbullet M)$
\begin{itemize}
\item
the Rees module $R_FM$,
\item
the localized Laplace transform $G$, that we regard as a $\CC[v,v^{-1}]$-module, and which is free of finite rank as such,
\item
the Brieskorn lattice $G^0=G_{(F)}^0$ associated to the filtration, which is a free $\CC[u]$-module ($u=v^{-1}$) with an action of $u^2\partial_u$ (\cf\eg\cite[App.]{S-Y14}),
\item
the Rees module $R_{G_{(F)}}(G)$ attached to the decreasing filtration $G_{(F)}^p=u^pG^0$.
\end{itemize}

Let $\ccM$ be the $\cD_{\PP^1}$-module such that $\ccM=\ccM(*\infty)$ and $M=\Gamma(\PP^1,\ccM)$. Our assumption above implies that $\ccM$ is equal to its minimal extension at $\tau=\infty$. We denote by $\tau'=1/\tau$ the coordinate centered at $\infty$ and by $V^\cbbullet\ccM$ the $V$-filtration of $\ccM$ with respect to $\tau'$. For $\alpha\in\RR$ and $\lambda=\exp(-\twopii\alpha)$, we set $\psi_{\tau',\lambda}\ccM=\gr^\alpha_V\ccM$.

The Hodge filtration $F^\cbbullet M$ extends naturally to $V^\alpha\ccM$, as indicated in Section \ref{subsec:nearby}. In such a way, $(\ccM,F^\cbbullet\ccM)$ is strictly specializable at $\tau'=0$. The space $\psi_{\tau',\lambda}\ccM$ comes equipped with the induced filtration given by $F^\cbbullet\psi_{\tau',\lambda}\ccM=F^\cbbullet\gr^\alpha_V\ccM$, from which we construct the Rees module $R_F\psi_{\tau',\lambda}\ccM$.

On the other hand, let $V^\cbbullet G$ be the $V$-filtration of $G$ with respect to the function~$v$. We set similarly $\psi_{v,\lambda}G=\gr^\alpha_VG$, and the filtration $G_{(F)}^\cbbullet$ induces on it the filtration $G_{(F)}^\cbbullet\psi_{v,\lambda}G$, form which we construct the Rees module $R_{G_{(F)}}\psi_{v,\lambda}G$.

The ``inverse stationary phase formula'' of \cite[Prop.\,4.1(iv)]{Bibi05b} applied to $\cM=R_F\ccM$, together with \cite[Lem.\,5.20\,$(*)_\infty$]{Bibi08} give, for $\lambda\neq1$,
\begin{equation}\label{eq:GF}
R_{G_{(F)}}\psi_{v,\lambda}G\simeq R_F\psi_{\tau',\lambda}\ccM,
\end{equation}
that we can regard in each degree $p$ as an isomorphism $G_{(F)}^p\psi_{v,\lambda}G\simeq F^p\psi_{\tau',\lambda}\ccM$.

On the other hand, let us set $G_{|u=1}=G^0/(u-1)G^0$. The irregular Hodge filtration is the filtration induced on $G_{|u=1}$ by the $V$-filtration of $G$ with respect to the coordinate $v=1/u$ (\cf\cite[Def.\,3.2]{Bibi15}). In the decreasing version, we have
\[
F^\gamma_\irr G_{|u=1}=(V^\gamma\cap G^0)/(V^\gamma\cap G^0)\cap (u-1)G^0.
\]
By \cite[(1.3)]{Bibi08} we have (by replacing there $z$ with $v$ and taking $z_o=0$)
\[
\dim\gr^\gamma_{F_\irr}G_{|u=1}=\dim\bigl((V^\gamma\cap G^0)/[(V^{>\gamma}\cap G^0)+(V^\gamma\cap G^1)]\bigr).
\]
Let us set $\alpha=\{\gamma\}:=\gamma-[\gamma]\in[0,1)$, so that $v^{-[\gamma]}(V^\gamma\cap G^0)=V^\alpha\cap G^{[\gamma]}$. Then, for $p\in\ZZ$ and $\alpha\in[0,1)$, we find\vspace*{-3pt}
\[
\dim\gr^{\alpha+p}_{F_\irr}G_{|u=1}=\dim\gr^p_{G_{(F)}}\psi_{v,\lambda}G.
\]
Together with \eqref{eq:GF} we obtain:
\begin{equation}\label{eq:FirrFourier}
\dim\gr^{\alpha+p}_{F_\irr}G_{|u=1}=\dim\gr^p_F\psi_{\tau',\lambda}\ccM\quad(\alpha\in[0,1),\;\lambda=\exp(-\twopii\alpha)).
\end{equation}

The previous relation was proved by means of the results of \cite{Bibi05b,Bibi08}, provided $(M,F^\cbbullet M)$ underlies a pure polarizable Hodge module. Let us remark that it also holds if $(M,F^\cbbullet M)$ underlies a mixed Hodge module: this is proved by induction on the weight by considering short exact sequences $0\to W_{k-1}\to W_k\to\gr_k^W\to0$. Indeed, these exact sequences for mixed Hodge module are $F$-strict, and their Laplace transforms are $F_\irr$\nobreakdash-strict, because they underlie exact sequences of irregular mixed Hodge modules.

\section{End of the proof of the theorem}
We use the notation of Section \ref{subsec:Katz}. We first argue as in \cite[\S3.2.c]{Bibi15} to relate the irregular Hodge filtration of $H$ and that of $H_\mu$. Moreover, we have already seen that we can reduce the proof of the theorem to the case where Assumption \eqref{ass:stronglynonresonant} holds, so that in particular $\mu\alpha_k\notin\ZZ$ and thus $\alpha_k\notin\ZZ$.

By \cite[Th.\,0.7]{Bibi15}, the minimal extension $\cH^{\min}(\alphag,\betag)$ at $t=0$ underlies a unique object $\cT^{\min}(\alphag,\betag)$ of $\IrrMHM(\PP^1_t)$, and it comes equipped with a unique (up to an $\RR$-shift) irregular Hodge filtration. We also denote by $\cT^{\min}(\alphag,\betag)$ the associated pure polarizable twistor $\cD$-module on $\PP^1_t$ and by $\cT(\alphag,\betag)$ the localized object in $\MTM^\intt(\PP^1_t,[*0])$. With Assumption \eqref{ass:stronglynonresonant}, we have $\cT^{\min}(\alphag,\betag)=\cT(\alphag,\betag)$.

The pullback $\cT_\mu(\alphag, \betag)$ of $\cT(\alphag, \betag)$ endows $\cH_\mu(\alphag, \betag)$ with the structure of an integrable mixed twistor $\cD$-module on $\PP^1_v$ localized at $v=0$, and its minimal extension $\cT_\mu^{\min}(\alphag, \betag)$ at $v=0$ is a polarizable twistor $\cD$-module which is pure, with associated $\CC[v]\langle\partial_v\rangle$-module $\cH_\mu^{\min}(\alphag, \betag)$. With Assumption \eqref{ass:stronglynonresonant}, we have $\cT_\mu^{\min}(\alphag,\betag)=\cT_\mu(\alphag,\betag)$. Since the covering $\rho_\mu$ is a smooth morphism, the rank and jumping indices of the irregular Hodge filtrations are not altered by the pullback by $\rho_\mu$, hence those of $\cH(\alphag,\betag)$ coincide with those of $\cH_\mu(\alphag,\betag)$.

Moreover, as a polarizable pure twistor $\cD$-module, $\cT_\mu^{\min}(\alphag,\betag)$ is obtained as the Fourier-Laplace transform, in the twistor sense, of the polarizable variation of Hodge structure that $\wh H_\mu$ underlies. Therefore, we can apply to it \cite[Prop.\,2.61]{Bibi15}, and also the results of Section \ref{subsec:Laplace}.

We apply Formula \eqref{eq:FirrFourier} with $M=\wh H_\mu$ and $G=H_\mu$, both defined in \S\ref{subsec:Katz}. Since the right-hand term concerns the behaviour at $\tau=\infty$, we can replace $\wh H_\mu$ with $\wh H_\mu(*0)$. We note that the assumption used in \eqref{eq:FirrFourier} is satisfied here, since the eigenvalues $\lambda_k=\exp(-\twopii\mu\alpha_k)$ are not equal to one, as a consequence of Assumption \eqref{ass:stronglynonresonant}.

Formula \eqref{eq:nupinfHprimemu} then gives\vspace*{-3pt}\enlargethispage{.7\baselineskip}%
$$
\dim\gr^{\{\mu\alpha_k\}+p}_{F_\irr}H=\dim\gr^{\{\mu\alpha_k\}+p}_{F_\irr}H_\mu=
\begin{cases}
\#\{j\mid\rho(j)=\rho(k)\}&\text{if }p=p_k,\\
0&\text{otherwise},
\end{cases}
$$
which is equivalent to \eqref{eq:hypergeomstarstar}.
\hfill $\Box$

\providecommand{\bysame}{\leavevmode\hbox to3em{\hrulefill}\thinspace}
\providecommand{\Zbl}[1]{\href{https://zbmath.org/#1}{Zbl-#1}}
%
%

\bibliographystyle{amsalpha}
\bibliographymark{References}
\def\cprime{$'$}

\end{document}